\newtheorem{thm}{Theorem}[section]
\newtheorem{prop}[thm]{Proposition}
\newtheorem{lem}[thm]{Lemma}
\newtheorem{prob}[thm]{Problem}
\newtheorem{ex}[thm]{Example}
\theoremstyle{definition}
\newtheorem{rem}[thm]{Remark}
\newcommand{\Z}{\mathbb{Z}}
\newcommand{\R}{\mathbb{R}}
\newcommand{\C}{\mathbb{C}}
\renewcommand{\a}{\mathfrak{a}}
\newcommand{\g}{\mathfrak{g}}
\newcommand{\h}{\mathfrak{h}}
\renewcommand{\j}{\mathfrak{j}}
\renewcommand{\k}{\mathfrak{k}}
\newcommand{\n}{\mathfrak{n}}
\newcommand{\z}{\mathfrak{z}}
\renewcommand{\L}{\mathcal{L}}
\newcommand{\w}{\wedge}
\newcommand{\bs}{\backslash}
\newcommand{\tr}{\operatorname{tr}}
\newcommand{\ad}{\operatorname{ad}}
\newcommand{\Stab}{\operatorname{Stab}}
\newcommand{\stab}{\mathfrak{stab}}
\renewcommand{\leq}{\leqslant}
\newcommand{\bysame}{------}
\begin{document}

\title{Homogeneous spaces of nonreductive type locally modelling no compact manifold}
\author{Yosuke Morita}
\date{}

\maketitle

\begin{abstract}
We give necessary conditions for the existence of a compact manifold locally modelled on a given homogeneous space, which generalize some earlier results, 
in terms of relative Lie algebra cohomology. 
Applications include both reductive and nonreductive cases. 
For example, we prove that there does not exist a compact manifold locally modelled on a positive dimensional coadjoint orbit of a real linear solvable algebraic group. 
\end{abstract}

\section{Introduction}

Let $G/H$ be a homogeneous space. 
A manifold is called locally modelled on $G/H$ 
if it is covered by open sets that are diffeomorphic to open sets of $G/H$ 
and their coordinate changes are given by left translations by elements of $G$. 
A typical example is a double coset space $\Gamma \bs G/H$, 
where $\Gamma$ is a discrete subgroup of $G$ acting properly and freely on $G/H$. 
In this case $\Gamma$ is called a discontinuous group for $G/H$ and 
$\Gamma \bs G/H$ is called a Clifford--Klein form. 
A manifold locally modelled on a homogeneous space is a fundamental object of the study of ``geometry'' in the sense of Klein's Erlangen program. 
Thus, one of the central questions in geometry is to understand topological features of manifolds locally modelled on a given homogeneous space. 

In this paper, we study the following problem proposed by T. Kobayashi: 

\begin{prob}[\cite{Kob89}]\label{prob:cpt}
Which homogeneous space can locally model a compact manifold?
Which homogeneous space admits a compact Clifford--Klein form? 
\end{prob}

Various methods have been applied to study this problem 
(See surveys \cite{Kob96survey}, \cite{Lab96}, \cite{Kob-Yos05}, \cite{Con13} and references therein). 
One is a cohomological method, that is, 
to investigate ``locally invariant'' differential forms on a manifold locally modelled on a homogeneous space and their cohomology classes. 
This method was initiated by Kobayashi--Ono \cite{Kob-Ono90} and 
has been used and extended in \cite{Ben-Lab92} and \cite{Mor}. 
In this paper, we find that this method is useful even when $G$ is not reductive. 
Note that, for a nonreductive Lie group $G$, less is known about Problem~\ref{prob:cpt} 
in particular because we cannot use the properness criterion of Benoist \cite{Ben96} and Kobayashi \cite{Kob96proper} anymore. 

In this paper, we use lowercase German letters for the Lie algebras of Lie groups denoted by uppercase Roman letters. 
For example, the Lie algebras of $G$, $K_H$, and $\Stab(X)$ are $\g$, $\k_H$, and $\stab(X)$, respectively. 
Then, our main result is stated as follows: 

\begin{thm}\label{thm:main}
Let $G$ be a Lie group, $H$ its closed subgroup with finitely many connected components, and $N$ the codimension of $H$ in $G$. 

\textup{(1)} If $(\Lambda^N (\g/\h)^\ast)^\h \neq 0$ and $H^N(\g, \h; \R) = 0$, 
then there is no compact manifold locally modelled on $G/H$. 

\textup{(2)} Take a maximal compact subgroup $K_H$ of $H$. 
Let 
\[
i: H^N(\g, \h; \R) \to H^N(\g, \k_H; \R)
\]
be the homomorphism induced by the inclusion of Lie algebras $\k_H \subset \h$. 
If $i$ is not injective, 
then there is no compact manifold locally modelled on $G/H$. 
\end{thm}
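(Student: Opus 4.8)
The plan is to realize relative Lie algebra cohomology geometrically by ``locally invariant'' differential forms, in the spirit of Kobayashi--Ono. First I would recall that the $G$-invariant forms on $G/H$ are canonically the relative Chevalley--Eilenberg complex $(\Lambda^\bullet(\g/\h)^\ast)^\h$, with cohomology $H^\bullet(\g,\h;\R)$, and that the de Rham differential restricts to the relative differential. If $M$ is compact and locally modelled on $G/H$, then each chart identifies an open subset of $M$ with an open subset of $G/H$, and since the coordinate changes are left translations --- which preserve invariant forms --- every invariant form on $G/H$ pulls back to a globally well-defined form on $M$. This yields a morphism of differential graded algebras $\Psi_H\colon(\Lambda^\bullet(\g/\h)^\ast)^\h\to\Omega^\bullet(M)$ and hence a ring homomorphism $\psi_H\colon H^\bullet(\g,\h;\R)\to H^\bullet_{\mathrm{dR}}(M)$; the identical construction applied to any homogeneous space will reappear below. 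The one analytic input I need is top-degree detection: a nonzero element of $(\Lambda^N(\g/\h)^\ast)^\h$ is an invariant volume form $\omega$, its image $\Psi_H(\omega)$ is a nowhere-vanishing $N$-form on the compact $N$-manifold $M$, so, with the orientation it defines, $\int_M\Psi_H(\omega)\neq 0$ and therefore $\psi_H([\omega])\neq 0$ in $H^N_{\mathrm{dR}}(M)$.

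For part (1), fix such an $\omega$, which exists by hypothesis. Were $M$ a compact manifold locally modelled on $G/H$, the class $[\omega]\in H^N(\g,\h;\R)$ would vanish, since $H^N(\g,\h;\R)=0$; thus $\omega=d\eta$ in the relative complex for some $\eta\in(\Lambda^{N-1}(\g/\h)^\ast)^\h$, and $\Psi_H(\omega)=d\Psi_H(\eta)$ would be exact on $M$. Stokes' theorem would give $\int_M\Psi_H(\omega)=0$, contradicting $\int_M\Psi_H(\omega)\neq 0$. Hence no such $M$ exists.

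For part (2) I would prove the contrapositive: if a compact $M$ locally modelled on $G/H$ exists, then $i$ is injective. As $\Lambda^N(\g/\h)^\ast$ is one-dimensional, $H^N(\g,\h;\R)$ is $0$ or $\R$; in the first case $i$ is injective, so assume it is spanned by the class $[\omega]$ of an invariant volume form. From the $(G,G/H)$-structure on $M$ I would build the associated bundle $p\colon\hat M\to M$ with fiber $H/K_H$ by pulling back $G/K_H\to G/H$ along the charts; since the transition left translations of $G/H$ lift to left translations of $G/K_H$, the total space $\hat M$ is a manifold locally modelled on $G/K_H$, and the same machinery gives $\psi_K\colon H^\bullet(\g,\k_H;\R)\to H^\bullet_{\mathrm{dR}}(\hat M)$. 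Because $K_H$ is maximal compact and $H$ has finitely many components, $H/K_H$ is diffeomorphic to a Euclidean space; the fiber is thus contractible, so $p$ is a homotopy equivalence and $p^\ast$ is an isomorphism. The projection $G/K_H\to G/H$ pulls invariant forms back to invariant forms, realizing on cochains precisely the map inducing $i$ and intertwining $\psi_H$ with $\psi_K$, which gives a commutative square
\[
\begin{CD}
H^N(\g,\h;\R) @>{\psi_H}>> H^N_{\mathrm{dR}}(M)\\
@V{i}VV @VV{p^\ast}V\\
H^N(\g,\k_H;\R) @>{\psi_K}>> H^N_{\mathrm{dR}}(\hat M).
\end{CD}
\]
By top-degree detection $\psi_H([\omega])\neq 0$, and $p^\ast$ is injective, so $p^\ast\psi_H([\omega])\neq 0$; by commutativity this equals $\psi_K(i([\omega]))$, forcing $i([\omega])\neq 0$. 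Hence $i$ is injective on the one-dimensional space $H^N(\g,\h;\R)$, proving the contrapositive.

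The genuinely routine parts are the chain- and ring-map properties of $\Psi_H$ and the appeal to Stokes' theorem. The main obstacle I anticipate is the careful construction of $\hat M$ from the $(G,G/H)$-structure and the verification that it is honestly locally modelled on $G/K_H$ with $\psi_K$ intertwined with $\psi_H$ via $p^\ast$; once this compatibility and the contractibility of $H/K_H$ (hence injectivity of $p^\ast$) are established, the cohomological conclusion is immediate. A smaller point to record is that non-injectivity of $i$ already forces $(\Lambda^N(\g/\h)^\ast)^\h\neq 0$, so the invariant volume form that drives the argument is automatically present.
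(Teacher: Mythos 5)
Your argument is essentially the paper's own proof: the patching map $\eta$ (your $\Psi_H$) from invariant forms to forms on $M$, top-degree detection via the invariant volume form and compactness for part (1), and for part (2) the associated bundle over $M$ with fibre $H/K_H$, contractibility of that fibre by Cartan--Malcev--Iwasawa--Mostow, and the same commutative square. The one genuine gap is at the very first step: the $G$-invariant forms on $G/H$ are canonically $(\Lambda^\bullet(\g/\h)^\ast)^{H}$, not $(\Lambda^\bullet(\g/\h)^\ast)^{\h}$, and these differ when $H$ is disconnected. Since $\Lambda^N(\g/\h)^\ast$ is one-dimensional, the component group of $H$ may act on it by a nontrivial character, in which case a nonzero $\h$-invariant top form does not descend to a $G$-invariant volume form on $G/H$ and your $\Psi_H(\omega)$ is simply not defined on $M$; the same issue affects the identification of $H^N(\g,\h;\R)$ with the cohomology of $G$-invariant forms used to define $\psi_H$. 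The theorem is nevertheless stated with $\h$-invariance, so you need the standard reduction to the identity component: if $M$ is compact and locally modelled on $G/H$, the locally $G$-equivariant bundle over $M$ associated to $G/H_0 \to G/H$ is a finite covering of $M$, hence a compact manifold locally modelled on $G/H_0$, and for connected $H_0$ your identifications are valid (this is the paper's Lemma~\ref{lem:H_0}; note also that a maximal compact subgroup of $H_0$ has the same Lie algebra $\k_H$). With that reduction inserted, the rest of your proof goes through exactly as in the paper.
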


Some applications of this theorem are given in Sections 6--7. 

The idea of Theorem~\ref{thm:main} (1) is already implicit in \cite{Ben-Lab92}. We shall give its proof for the sake of completeness. 
Theorem~\ref{thm:main} (2) is proved in \cite{Mor} 
under the assumptions that $G$ is reductive and $H$ is reductive in $G$. 
Our improvement is to separate the Poincar\'e duaity argument from the other parts of the proof (cf. Proposition~\ref{prop:lower-deg}). 
This enables us to prove the theorem in general situations. 

Theorem~\ref{thm:main} generalizes some earlier results in \cite{Kob-Ono90}, \cite{Kob89}, \cite{Ben-Lab92}, and \cite{Mor} (see Section 5). 

\section{Preliminaries}
In this section, we review the definition of a homomorphism $\eta: H^p(\g, H; \R) \to H^p(M; \R)$, which plays a foundational role in the cohomological study of Problem~\ref{prob:cpt}. 

Let $X$ be a real analytic manifold with an action of a Lie group $G$. 
Recall that a $(G, X)$-structure on a manifold $M$ is a collection of 
$(U_i)_{i \in I}$, $(\phi_i)_{i \in I}$, $(g_{ij})_{i,j \in I}$, where $(U_i)_{i \in I}$ is an open covering of $M$, 
$\phi_i$ is a diffeomorphism from $U_i$ to some open set of $X$, 
and 
$g_{ij}: U_i \cap U_j \to G$ is a locally constant map satisfying
\[
g_{ij}(p)  \phi_j(p) = \phi_i(p) \quad (p \in U_i \cap U_j). 
\]
We assume the cocycle condition for the transition functions $(g_{ij})_{i,j \in I}$: 
\[
g_{ii}(p) = 1 \quad (p \in U_i), \qquad
g_{ij}(p)  g_{jk}(p)  g_{ki}(p) = 1 \quad (p \in U_i \cap U_j \cap U_k).
\]
It is automatically satisfied if $X$ is connected and $G$ acts on $X$ effectively. 
We mainly consider the case when $G$ acts transitively on $X$, 
namely, $X = G/H$ for some closed subgroup $H$ of $G$. 
A manifold equipped with a $(G, G/H)$-structure is also called 
a manifold locally modelled on $G/H$. 

Let $M$ be a manifold equipped with a $(G, X)$-structure 
$(U_i)_{i \in I}$, $(\phi_i)_{i \in I}$, $(g_{ij})_{i,j \in I}$. 
Let $\pi : E \to X$ be a $G$-equivariant fibre bundle on $X$ with typical fibre $F$. 
Patching $(\phi_i^\ast E)_{i \in I}$ by $(g_{ij})_{i,j \in I}$, 
we get a fibre bundle $\pi_M : E_M \to M$ with the same typical fibre $F$. 
We call it the locally $G$-equivariant bundle over $M$ corresponding to $E$. 
By definition $E_M$ naturally equips a $(G, E)$-structure. 
We can define 
\[
\eta: \Gamma(X; E)^G \to \Gamma(M; E_M)
\]
also by patching construction. 
In particular, if $X=G/H$ and $E = \Lambda^p T^\ast X$, this is written as 
\[
\eta : (\Lambda^p(\g/\h)^\ast)^H \to \Omega^p(M). 
\]
Here, we naturally identified 
$\Omega^p(G/H)^G$ with $(\Lambda^p(\g/\h)^\ast)^H$. 
Taking cohomology, we get a homomorphism
\[
\eta : H^p(\g, H; \R) \to H^p(M; \R) 
\]
(see e.g. \cite[\S 1.3]{Fuk}, \cite[\S 2.2]{SMor} for the definition of relative Lie algebra cohomology $H^p(\g, H; \R)$). 
Such a homomorphism $\eta$ appears explicitly or implicitly in many branches of geometry and representation theory, 
e.g. the Matsushima--Murakami formula \cite{Mat-Mur63}, 
characteristic classes of foliations \cite{Bot-Hae72}, 
a generalization of Hirzebruch's proportionality principle \cite{Kob-Ono90}, 
and the existence problem of a compact manifold locally modelled on homogeneous spaces \cite{Kob-Ono90}, \cite{Ben-Lab92}, \cite{Mor}. 

\section[Proof of Theorem 1.2]{Proof of Theorem~\ref{thm:main}}

\begin{lem}\label{lem:H_0}
Let $G$ be a Lie group and $H$ its closed subgroup with finitely many connected components. We write $H_0$ for the identity component of $H$. 
If there is no compact manifold locally modelled on $G/H_0$, neither is on $G/H$. 
\end{lem}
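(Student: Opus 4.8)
The plan is to prove the contrapositive: assuming a compact manifold $M$ locally modelled on $G/H$ exists, I will construct a compact manifold locally modelled on $G/H_0$. The key observation is that $H_0$ is a finite-index normal subgroup of $H$, so $G/H_0 \to G/H$ is a covering map (in fact a fibre bundle with discrete fibre $H/H_0$) that is $G$-equivariant. A $(G, G/H)$-structure on $M$ should pull back along this covering to give a $(G, G/H_0)$-structure on a suitable covering space of $M$.

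Let me describe the construction more concretely. Write $F = H/H_0$, a finite set on which $H$, and hence the $(G,G/H)$-cocycle data, acts. The natural map $p: G/H_0 \to G/H$ is a $G$-equivariant covering, so I can form the locally $G$-equivariant bundle $E_M \to M$ over the given $(G, G/H)$-manifold $M$ corresponding to the $G$-space $G/H_0 \to G/H$ (viewed as a $G$-equivariant fibre bundle over $G/H$ with typical fibre the discrete set $F$), using the patching construction of Section~2. Concretely, I would take the charts $\phi_i : U_i \to G/H$ of $M$, pull back the covering $p$ to get coverings $\phi_i^\ast(G/H_0) \to U_i$, and glue them by the same transition functions $g_{ij}$ acting via $G$ on $G/H_0$. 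The total space $\widetilde{M} := E_M$ is then a covering space of $M$, and the local charts it inherits map into $G/H_0$; the transition maps are still given by left translations by elements of $G$. Hence $\widetilde{M}$ carries a natural $(G, G/H_0)$-structure.

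It remains to check that $\widetilde{M}$ is compact. Since $F = H/H_0$ is finite, each fibre $\phi_i^\ast(G/H_0) \to U_i$ has finite fibres, so $\widetilde{M} \to M$ is a finite covering (of degree $|F| = [H:H_0]$). A finite covering of a compact manifold is compact, so $\widetilde{M}$ is a compact manifold locally modelled on $G/H_0$. This contradicts the hypothesis that no such manifold exists, completing the proof.

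The main point requiring care is the verification that the glued object $\widetilde{M}$ is genuinely a manifold with a bona fide $(G, G/H_0)$-structure, rather than just an abstract bundle: one must confirm that the transition functions of the charts into $G/H_0$ are again locally constant $G$-valued maps satisfying the cocycle condition, which follows because they are literally the same $g_{ij}$ used for $M$, now acting on $G/H_0$ through the $G$-action. Everything else is routine, and the finiteness of $H/H_0$ is precisely the hypothesis that guarantees compactness is preserved.
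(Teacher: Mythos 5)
Your argument is exactly the paper's proof: form the locally $G$-equivariant bundle over $M$ corresponding to the $G$-equivariant covering $G/H_0 \to G/H$, note that its total space inherits a $(G, G/H_0)$-structure by the patching construction of Section~2, and that it is compact because the fibre $H/H_0$ is finite. The proposal is correct and takes essentially the same approach as the paper.
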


\begin{proof}[Proof of Lemma~\ref{lem:H_0}]
This is well-known at least for Clifford--Klein forms. Suppose there is a compact manifold $M$ locally modelled on $G/H$. Consider the locally $G$-equivariant fibre bundle $\pi_M: M_0 \to M$ corresponding to $\pi : G/H_0 \to G/H$. Then the total space $M_0$ is locally modelled on $G/H_0$ and compact. 
\end{proof}

Thus we may assume $H$ to be connected without loss of generality. 
Now, it is enough to see: 

\begin{prop}\label{prop:gH}
Let $G$ be a Lie group, $H$ its closed subgroup, and $N$ the codimension of $H$ in $G$. 

\textup{(1)} If $(\Lambda^N (\g/\h)^\ast)^H \neq 0$ and $H^N(\g, H; \R) = 0$, then there is no compact manifold locally modelled on $G/H$. 

\textup{(2)} Suppose that $H$ has finitely many connected components. 
Take a maximal compact subgroup $K_H$ of $H$. If the homomorphism 
\[
i: H^N(\g, H; \R) \to H^N(\g, K_H; \R)
\]
is not injective, then there is no compact manifold locally modelled on $G/H$. 
\end{prop}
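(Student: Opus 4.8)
The plan is to base both parts on the differential graded algebra homomorphism $\eta_M \colon \Omega^\ast(G/H)^G = C^\ast(\g,H) \to \Omega^\ast(M)$ from Section~2, together with the observation that under the hypotheses there is always a $G$-invariant volume form on $G/H$ whose image is a nowhere-vanishing top form on $M$. In both parts the space $(\Lambda^N(\g/\h)^\ast)^H$ is nonzero: this is assumed in (1), and in (2) it must hold because $i$ can fail to be injective only if $H^N(\g,H;\R)\neq 0$, and $H^N(\g,H;\R)$ is a subquotient of $C^N(\g,H)=(\Lambda^N(\g/\h)^\ast)^H$. Fix a generator $\omega_0$ of this one-dimensional space. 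Being $H$-invariant and nonzero, $\omega_0$ is nowhere vanishing, so $\eta_M(\omega_0)\in\Omega^N(M)$ is a nowhere-vanishing $N$-form; hence $M$ is orientable, and orienting $M$ by $\eta_M(\omega_0)$ gives $\int_M \eta_M(\omega_0)\neq 0$, so $[\eta_M(\omega_0)]\neq 0$ in $H^N(M;\R)$.

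For part (1): since $N$ is the top degree of $C^\ast(\g,H)$, the cochain $\omega_0$ is automatically a cocycle, and its class lies in $H^N(\g,H;\R)=0$, so $\omega_0 = d\beta$ for some $\beta\in C^{N-1}(\g,H)$. As $\eta_M$ is a chain map, $\eta_M(\omega_0)=\eta_M(d\beta)=d\,\eta_M(\beta)$ is exact, whence $[\eta_M(\omega_0)]=0$, contradicting the previous paragraph. So no compact $M$ exists.

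For part (2): I would first replace $M$ by the locally $G$-equivariant bundle $\pi_M\colon \tilde M\to M$ corresponding to the projection $G/K_H\to G/H$, exactly as in Section~2 and in the proof of Lemma~\ref{lem:H_0}. Its fibre $H/K_H$ is diffeomorphic to a Euclidean space because $K_H$ is a maximal compact subgroup of $H$ (which has finitely many components), so $\pi_M$ is a homotopy equivalence and $\pi_M^\ast\colon H^\ast(M;\R)\to H^\ast(\tilde M;\R)$ is an isomorphism, while $\tilde M$ is locally modelled on $G/K_H$. Naturality of the patching construction for $\eta$ yields $\eta_{\tilde M}\circ i = \pi_M^\ast\circ\eta_M$ on cohomology, where $i$ is the cochain-level restriction $(\Lambda^\ast(\g/\h)^\ast)^H\to(\Lambda^\ast(\g/\k_H)^\ast)^{K_H}$ induced by $G/K_H\to G/H$. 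Now take $\alpha\in H^N(\g,H;\R)$ with $\alpha\neq 0$ and $i(\alpha)=0$. Chasing the square, $\pi_M^\ast\eta_M(\alpha)=\eta_{\tilde M}(i(\alpha))=0$, and injectivity of $\pi_M^\ast$ forces $\eta_M(\alpha)=0$. But $H^N(\g,H;\R)$ is at most one-dimensional, so $\alpha$ is a nonzero multiple of $[\omega_0]$ and $\eta_M(\alpha)$ is a nonzero multiple of $[\eta_M(\omega_0)]\neq 0$ — a contradiction. Hence no compact $M$ exists.

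The step I expect to be the main obstacle is establishing the commutative square cleanly: one must check that the patching construction of $\eta$ is natural with respect to the $G$-equivariant map $G/K_H\to G/H$ and the induced bundle map $\tilde M\to M$, so that $\eta_{\tilde M}\circ i=\pi_M^\ast\circ\eta_M$ already holds at the level of differential forms. One must also confirm that $\tilde M$ genuinely carries a $(G,G/K_H)$-structure and that the fibre $H/K_H$ is Euclidean (Cartan's theorem), so that $\pi_M^\ast$ is an isomorphism; these are the inputs that transport the purely cohomological obstruction coming from $i$ down to $M$. In contrast to the reductive case of \cite{Mor}, the only Poincar\'e-duality-type input here is the integration pairing on the oriented compact manifold $M$ in top degree, which is why the argument applies for general $G$ and $H$.
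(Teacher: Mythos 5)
Your proposal is correct and follows essentially the same route as the paper: part (1) is the volume-form argument ($\eta(\Phi)$ is a nowhere-vanishing top form on compact $M$, hence not exact, contradicting exactness of $\Phi$), and part (2) uses the same commutative square relating $\eta$ for $M$ and for the bundle $E_M$ with contractible fibre $H/K_H$, together with the injectivity of $\eta$ in top degree established in (1). The only cosmetic difference is that you phrase (2) as a diagram chase on a specific class $\alpha\in\ker i$ rather than deducing injectivity of $i$ directly, which is the same argument.
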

\begin{rem}
Proposition~\ref{prop:gH} (1) holds true even if $H$ has infinitely many connected components. 
\end{rem}
\begin{proof}[Proof of Proposition~\ref{prop:gH}]
(1) Suppose, on the contrary, that there is a compact manifold $M$ locally modelled on $G/H$. 
Take a nonzero element $\Phi$ of $(\Lambda^N (\g/\h)^\ast)^H$; 
it is identified with a $G$-invariant volume form on $G/H$. 
Hence $\eta(\Phi) \in \Omega^N(M)$ is a volume form on $M$ by construction of $\eta$, 
and $[\eta(\Phi)] \neq 0$ in $H^N(M; \R)$ by compactness of $M$. 
On the other hand, $[\Phi] = 0$ in $H^N(\g, H; \R)$ by assumption, 
and $[\eta(\Phi)] = 0$ in $H^N(M; \R)$. This is contradiction. 

(2) Let $M$ be a compact manifold locally modelled on $G/H$. 
Let $\pi_M: E_M \to M$ be the locally $G$-equivariant fibre bundle on $M$ 
corresponding to $\pi: G/K_H \to G/H$. 
Consider the following commutative diagram: 
\[
 \begin{CD}
  H^N(\g, H; \R) @>{i}>> H^N(\g, K_H; \R) \\
  @V{\eta}VV  @V{\eta}VV \\
  H^N(M; \R) @>{\pi_M^\ast}>> H^N(E_M; \R). \\
 \end{CD}
\]
We saw in the proof of (1) that the homomorphism $\eta: H^N(\g, H; \R) \to H^N(M; \R)$ is injective. 
The typical fibre $H/K_H$ of the fibre bundle $\pi_M: E_M \to M$ 
is contractible by the Cartan--Malcev--Iwasawa--Mostow theorem (cf. \cite[Ch.\! XV, Th.\! 3.1]{Hoc65}), 
thus $\pi_M^\ast : H^N(M; \R) \to H^N(E_M; \R)$ is an isomorphism. 
These yield the injectivity of $i: H^N(\g, H; \R) \to H^N(\g, K_H; \R)$. 
\end{proof}

\section[Equivalent form of Theorem 1.2 (1)]{Equivalent form of Theorem~\ref{thm:main} (1)}

It is sometimes useful to rewrite Theorem~\ref{thm:main} (1) as follows: 

\begin{prop}\label{prop:trace-free}
Let $G$ be a Lie group and $H$ its closed subgroup with finitely many connected components. 
Let $\n_\g(\h)$ denote the normalizer of $\h$ in $\g$. 
If the $\h$-action on $\g/\h$ is trace-free 
(i.e. $\tr(\ad_{\g/\h}(X)) = 0$ for all $X \in \h$) 
and the $\n_\g(\h)$-action on $\g/\h$ is not trace-free, 
then there is no compact manifold locally modelled on $G/H$. 
\end{prop}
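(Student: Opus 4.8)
The plan is to deduce Proposition~\ref{prop:trace-free} from Theorem~\ref{thm:main}~(1), i.e.\ from Proposition~\ref{prop:gH}~(1), by showing that the two trace hypotheses are precisely a reformulation of the conditions $(\Lambda^N(\g/\h)^\ast)^H \neq 0$ and $H^N(\g, H; \R) = 0$. Since both the $\h$-action and the $\n_\g(\h)$-action on $\g/\h$ depend only on the Lie algebra, the hypotheses are unaffected by replacing $H$ with its identity component; by Lemma~\ref{lem:H_0} it therefore suffices to treat the case $H$ connected, for which $H$-invariance and $\h$-invariance coincide. So I would first reduce to connected $H$ and then verify the two conditions of Proposition~\ref{prop:gH}~(1) in turn.

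For the first condition, I would note that the one-dimensional space $\Lambda^N(\g/\h)^\ast$ carries a character of $H$ whose derivative is $X \mapsto -\tr(\ad_{\g/\h}(X))$ on $\h$; for connected $H$ this character is trivial exactly when the $\h$-action on $\g/\h$ is trace-free, which is the first hypothesis, so $(\Lambda^N(\g/\h)^\ast)^H \neq 0$. I would then fix a nonzero $\Phi \in (\Lambda^N(\g/\h)^\ast)^H$; viewed in $\Lambda^N\g^\ast$ it is a horizontal, $\h$-invariant (i.e.\ basic) top form, and because a basic form of degree $N+1$ must vanish, $d\Phi = 0$ in $\Lambda^{N+1}\g^\ast$.

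The heart of the argument is the computation of $H^N(\g, H; \R) = C^N/dC^{N-1}$, where $C^p = (\Lambda^p(\g/\h)^\ast)^H$ is the relative Chevalley--Eilenberg complex. Here $C^N = \R\Phi$, and contraction with $\Phi$ gives an $H$-equivariant isomorphism $\g/\h \xrightarrow{\sim} \Lambda^{N-1}(\g/\h)^\ast$, $\bar X \mapsto \iota_{\bar X}\Phi$, which identifies $C^{N-1}$ with $(\g/\h)^H = \n_\g(\h)/\h$. I would then check that for $X \in \n_\g(\h)$ the form $\iota_X\Phi$ is indeed basic (horizontality is automatic, while invariance uses $[X,\h]\subseteq \h$ together with the trace-free hypothesis), and compute, via Cartan's formula $\L_X = d\iota_X + \iota_X d$, the vanishing $d\Phi = 0$, and the coadjoint action on the top exterior power,
\[
d(\iota_X \Phi) = \L_X \Phi = -\tr(\ad_{\g/\h}(X))\,\Phi \qquad (X \in \n_\g(\h)).
\]
Consequently $dC^{N-1}$ consists of the scalar multiples $-\tr(\ad_{\g/\h}(X))\,\Phi$ with $X \in \n_\g(\h)$, so $dC^{N-1} = \R\Phi = C^N$ precisely when the $\n_\g(\h)$-action on $\g/\h$ is not trace-free; equivalently $H^N(\g, H; \R) = 0$ under our second hypothesis.

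With both conditions of Proposition~\ref{prop:gH}~(1) established, its conclusion yields that no compact manifold is locally modelled on $G/H$. The step I expect to require the most care is the displayed identity: one must justify working inside $\Lambda^\bullet\g^\ast$ with the Chevalley--Eilenberg differential, confirm that $\iota_X\Phi$ remains in the relative complex for $X \in \n_\g(\h)$ (this is exactly where the normalizer, rather than all of $\g$, enters), and track the sign in the coadjoint action $\L_X\Phi = -\tr(\ad_{\g/\h}(X))\,\Phi$. Once this local computation is in place, the identification $C^{N-1} \cong \n_\g(\h)/\h$ and the reduction to connected $H$ are routine.
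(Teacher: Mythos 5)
Your proposal is correct and follows essentially the same route as the paper: the paper also deduces the proposition from Theorem~\ref{thm:main}~(1) via a lemma identifying the trace-free condition on $\h$ with $(\Lambda^N(\g/\h)^\ast)^\h \neq 0$ and the trace-free condition on $\n_\g(\h)$ with $H^N(\g,\h;\R) \neq 0$, using exactly your computation $d(\iota(Y)\Phi) = \L(Y)\Phi = -\tr(\ad_{\g/\h}(Y))\Phi$ together with the observation that $\iota(Y)\Phi$ is $\h$-invariant precisely when $Y \in \n_\g(\h)$.
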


\begin{proof}
This is a direct consequence of Theorem~\ref{thm:main} (1) and the lemma below. 
\end{proof}

\begin{lem}\label{lem:top-term}
Let $\g$ be a Lie algebra, $\h$ its subalgebra, and $N$ the codimension of $\h$ in $\g$. 

\textup{(1)} The $\h$-action on $\g/\h$ is trace-free if and only if $(\Lambda^N (\g/\h)^\ast)^\h \neq 0$.

\textup{(2)} The $\n_\g(\h)$-action on $\g/\h$ is trace-free if and only if $H^N(\g, \h; \R) \neq 0$. 
\end{lem}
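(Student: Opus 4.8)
The plan is to prove both parts by direct computation in the Chevalley--Eilenberg complex, part (1) being essentially a one-line observation and part (2) amounting to an explicit identification of the image of the top relative differential. For (1), I would use that $\Lambda^N(\g/\h)^\ast$ is one-dimensional and that $\h$ acts on it through the character $X \mapsto -\tr(\ad_{\g/\h}(X))$, the dual of the action on $\Lambda^N(\g/\h)$. A one-dimensional representation has a nonzero invariant vector exactly when its character vanishes identically, so $(\Lambda^N(\g/\h)^\ast)^\h \neq 0$ iff $\tr(\ad_{\g/\h}(X)) = 0$ for all $X \in \h$, which is the trace-free condition.

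For (2), recall that $H^\ast(\g,\h;\R)$ is computed by the subcomplex of $\h$-basic forms, whose degree-$p$ piece is $(\Lambda^p(\g/\h)^\ast)^\h$; as $N$ is the top degree, $H^N(\g,\h;\R)$ is the cokernel of $d\colon (\Lambda^{N-1}(\g/\h)^\ast)^\h \to (\Lambda^N(\g/\h)^\ast)^\h$. If the $\h$-action is not trace-free, then by (1) the target vanishes, so $H^N = 0$, while the $\n_\g(\h)$-action is a fortiori not trace-free (since $\h \subseteq \n_\g(\h)$); both sides of the asserted equivalence are then false, and I may assume the $\h$-action trace-free and fix a generator $\Phi$ of the resulting one-dimensional space $(\Lambda^N(\g/\h)^\ast)^\h$. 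It remains to show that the image of $d$ is all of $\R\Phi$ exactly when the $\n_\g(\h)$-action fails to be trace-free.

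The core of the argument is to compute this image. First I would verify that contraction $\iota_Y$ by any $Y \in \n_\g(\h)$ preserves the basic complex, using $[X,Y]\in\h$ for $X \in \h$ together with the identity $\L_X\iota_Y = \iota_Y\L_X + \iota_{[X,Y]}$, and similarly that $\L_Y$ preserves horizontal forms. Next, the contraction map $v \mapsto \iota_v\Phi$ is an $\h$-equivariant isomorphism $\g/\h \xrightarrow{\sim} \Lambda^{N-1}(\g/\h)^\ast$---equivariant because $\Phi$ is $\h$-invariant, and an isomorphism by a dimension count and the nondegeneracy of a top form. Passing to $\h$-invariants and using $(\g/\h)^\h = \n_\g(\h)/\h$, this identifies $(\Lambda^{N-1}(\g/\h)^\ast)^\h$ with $\{\iota_{\bar Y}\Phi : Y \in \n_\g(\h)\}$. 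Finally, for such $Y$, Cartan's formula gives $d\iota_Y\Phi = \L_Y\Phi - \iota_Y d\Phi = \L_Y\Phi = -\tr(\ad_{\g/\h}(Y))\,\Phi$, where $d\Phi = 0$ since $d\Phi$ is a basic form of degree $N+1$, living in $\Lambda^{N+1}(\g/\h)^\ast = 0$, and where $\L_Y$ acts on $\Lambda^N(\g/\h)^\ast$ by the scalar $-\tr(\ad_{\g/\h}(Y))$. Thus the image of $d$ is $\{-\tr(\ad_{\g/\h}(Y))\,\Phi : Y \in \n_\g(\h)\}$, which is $0$ if the $\n_\g(\h)$-action is trace-free and all of $\R\Phi$ otherwise, giving the stated equivalence.

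I expect the main obstacle to be the identification $(\Lambda^{N-1}(\g/\h)^\ast)^\h = \{\iota_{\bar Y}\Phi : Y \in \n_\g(\h)\}$. The easy containment---that the image of $d$ contains every multiple $\tr(\ad_{\g/\h}(Y))\,\Phi$---follows immediately by contracting $\Phi$; it is the reverse containment, namely that these contractions exhaust all $\h$-invariant $(N-1)$-forms so that nothing else can map onto $\Phi$, that secures the converse direction of the equivalence. This hinges on the $\h$-equivariance of the contraction isomorphism and on the clean identification $(\g/\h)^\h = \n_\g(\h)/\h$; once these are in place, the trace computation via Cartan's magic formula is routine.
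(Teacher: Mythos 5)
Your proof is correct and takes essentially the same route as the paper's: identify $(\Lambda^{N-1}(\g/\h)^\ast)^\h$ with the contractions $\iota_Y\Phi$ for $Y \in \n_\g(\h)$ (the paper phrases this as ``every $(N-1)$-form is $\iota(Y)\Phi$ with $Y$ unique up to $\h$'' plus an invariance check, you as an $\h$-equivariant isomorphism $\g/\h \simeq \Lambda^{N-1}(\g/\h)^\ast$ followed by taking invariants) and then compute $d\iota_Y\Phi = -\tr(\ad_{\g/\h}(Y))\,\Phi$ via Cartan's formula. Your explicit handling of the degenerate case where the $\h$-action is not trace-free is a welcome addition that the paper leaves implicit.
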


\begin{proof}
(1) This follows immediately from the definition of a $\h$-action on $\Lambda^N (\g/\h)^\ast$. 

(2) Let $\iota$ denote the interior product and $\L$ the $\g$-action on $\Lambda \g^\ast$. 
Assume that $(\Lambda^N (\g/\h)^\ast)^\h \neq 0$ 
and fix a nonzero element $\Phi$ of $(\Lambda^N (\g/\h)^\ast)^\h$. 
We wish to determine when 
\[
d: (\Lambda^{N-1} (\g/\h)^\ast)^\h \to (\Lambda^N (\g/\h)^\ast)^\h
\]
is a zero map. 
Every element of $\Lambda^{N-1} (\g/\h)^\ast$ is written in the form 
$\iota(Y) \Phi$ ($Y \in \g$) and the choice of such $Y$ is unique up to $\h$. 
For $X \in \h$, 
\[
\L(X) \iota(Y) \Phi = \iota(Y) \L(X) \Phi - \iota([X,Y]) \Phi = \iota([X,Y])\Phi.
\]
It is equal to zero if and only if $[X, Y] \in \h$. 
Thus $\iota(Y) \Phi$ is $\h$-invariant if and only if $Y \in \n_\g(\h)$. 
Now, 
\[
d\iota(Y)\Phi = \L(Y) \Phi - \iota(Y)d\Phi = \L(Y) \Phi = -\tr(\ad_{\g/\h}(Y)) \Phi.
\]
Hence $d = 0$ on $(\Lambda^{N-1} (\g/\h)^\ast)^\h$ 
if and only if the $\n_\g(\h)$-action on $\g/\h$ is trace-free.
\end{proof}

\section{Relation with earlier results}

Kobayashi and Ono established necessary conditions for the existence of compact Clifford--Klein forms (\cite[Cor.\! 5]{Kob-Ono90}, \cite[Prop.\! 4.10]{Kob89}) 
using a cohomological method. 
We gave a generalization \cite[Th.\! 1.3]{Mor} of their necessary conditions. 
The following proposition shows that Theorem~\ref{thm:main} (2) further generalizes \cite[Th.\! 1.3]{Mor}. 

\begin{prop}\label{prop:lower-deg}
Let $G$ be a unimodular Lie group, $H$ its closed subgroup such that $\h$ is reductive in $\g$, and $N$ the codimension of $H$ in $G$. 
If $i: H^p(\g, \h; \R) \to H^p(\g, \k_H; \R)$ is injective for $p=N$, 
it is also injective for $0 \leq p \leq N-1$. 
\end{prop}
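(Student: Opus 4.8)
The plan is to deduce the statement from Poincar\'e duality for the pair $(\g,\h)$ together with the fact that $i$ is a ring homomorphism. Recall that $i$ is induced by the algebra chain map $(\Lambda^\bullet(\g/\h)^\ast)^\h \to (\Lambda^\bullet(\g/\k_H)^\ast)^{\k_H}$ coming from the projection $\g/\k_H \to \g/\h$ (dual to $(\g/\h)^\ast \hookrightarrow (\g/\k_H)^\ast$); since this chain map respects the wedge product, $i\colon H^\bullet(\g,\h;\R)\to H^\bullet(\g,\k_H;\R)$ is a homomorphism of graded rings. The crux is that, under the present hypotheses, $(\g,\h)$ satisfies Poincar\'e duality of formal dimension $N$, so that the cup product yields a perfect pairing into $H^N(\g,\h;\R)\cong\R$.

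First I would verify that the hypotheses force both trace conditions of Lemma~\ref{lem:top-term} for $(\g,\h)$. Since $\h$ is reductive it is unimodular, i.e. $\tr(\ad_\h(X))=0$ for $X\in\h$; combined with unimodularity of $\g$ and $\tr(\ad_\g(X))=\tr(\ad_\h(X))+\tr(\ad_{\g/\h}(X))$ this shows the $\h$-action on $\g/\h$ is trace-free, so by Lemma~\ref{lem:top-term}(1) there is a nonzero fundamental class $\Phi\in(\Lambda^N(\g/\h)^\ast)^\h$. For the normalizer, let $Y\in\n_\g(\h)$ and set $D:=\ad(Y)|_\h$, a derivation of $\h$ preserving $[\h,\h]$ and the centre $\z(\h)$, with $\tr(D|_{[\h,\h]})=0$. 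I claim $\tr(D|_{\z(\h)})=0$: indeed $D|_{\z(\h)}$ has no nonzero eigenvalue, for an eigenvector $z\in\z(\h)_\C$ with $[Y,z]=\lambda z$ and $\lambda\neq 0$ would give $\ad_\g(z)(Y)=-\lambda z\neq 0$ while $\ad_\g(z)(z)=0$, so $z\in\ker(\ad_\g(z))\cap\operatorname{im}(\ad_\g(z))=0$, contradicting semisimplicity of $\ad_\g(z)$ (which holds because $\h$ is reductive \emph{in} $\g$). Hence $\tr(\ad_{\g/\h}(Y))=-\tr(D)=0$, so the $\n_\g(\h)$-action on $\g/\h$ is trace-free and $H^N(\g,\h;\R)\cong\R$ by Lemma~\ref{lem:top-term}(2).

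With $[\Phi]$ spanning $H^N(\g,\h;\R)$ and $d$ vanishing on $(\Lambda^{N-1}(\g/\h)^\ast)^\h$ (the computation in the proof of Lemma~\ref{lem:top-term}), the wedge product is $d$-compatible on the complex $(\Lambda^\bullet(\g/\h)^\ast)^\h$, whose pairing is already perfect on cochains by reductivity of $\h$. Standard homological algebra then gives the perfect Poincar\'e pairing
\[
H^p(\g,\h;\R)\otimes H^{N-p}(\g,\h;\R)\xrightarrow{\ \cup\ } H^N(\g,\h;\R)\cong\R .
\]
Now suppose $0\neq\alpha\in H^p(\g,\h;\R)$ with $i(\alpha)=0$ for some $0\le p\le N-1$. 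By perfectness choose $\beta\in H^{N-p}(\g,\h;\R)$ with $\alpha\cup\beta=c[\Phi]$, $c\neq 0$. Applying the ring homomorphism $i$ gives $0=i(\alpha)\cup i(\beta)=i(\alpha\cup\beta)=c\,i([\Phi])$; but $i$ is injective in degree $N$ and $[\Phi]\neq 0$, so $i([\Phi])\neq 0$ and hence $c\,i([\Phi])\neq 0$, a contradiction. Therefore $i$ is injective in degree $p$.

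The routine part is the cup-product argument in the last paragraph. The main obstacle I expect is the second paragraph, namely establishing $H^N(\g,\h;\R)\cong\R$ and thereby Poincar\'e duality: this is precisely where reductivity of $\h$ \emph{in} $\g$ is used in an essential way, rather than abstract reductivity of $\h$, through the semisimplicity of $\ad_\g(z)$ for $z\in\z(\h)$ that forces the $\n_\g(\h)$-action on $\g/\h$ to be trace-free.
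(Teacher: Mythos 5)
Your proof is correct and its core is the same as the paper's: pair a nonzero $\alpha\in H^p(\g,\h;\R)$ with some $\beta\in H^{N-p}(\g,\h;\R)$ so that $\alpha\w\beta\neq 0$ in $H^N(\g,\h;\R)$, then use multiplicativity of $i$ and its assumed injectivity in degree $N$ to conclude $i(\alpha)\neq 0$. The only real difference is that the paper simply invokes Koszul's Poincar\'e duality theorem for $(\g,\h)$ at this point, whereas you verify the duality yourself: you check that unimodularity of $\g$ together with reductivity of $\h$ in $\g$ forces both trace conditions of Lemma~\ref{lem:top-term} (your eigenvalue argument using semisimplicity of $\ad_\g(z)$ for $z\in\z(\h)$ is sound), so that $H^N(\g,\h;\R)\cong\R$ and the wedge pairing, perfect on $\h$-invariant cochains by complete reducibility of $\g/\h$ and compatible with $d$ because $d$ vanishes in degree $N-1$, descends to a perfect pairing on cohomology. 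This makes your version self-contained at the cost of length; it also silently corrects what appears to be a typo in the paper's proof, which writes $\eta(\alpha\w\beta)\neq 0$ where $i(\alpha\w\beta)\neq 0$ is clearly intended, since no manifold $M$ is present in this purely algebraic statement.
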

\begin{rem}
In this paper, we say that a Lie group $G$ is unimodular if the adjoint action of $\g$ on itself is trace-free. 
If $G$ is connected, it is equivalent to the existence of bi-invaraint Haar measure on $G$. 
\end{rem}
\begin{proof}[Proof of Proposition~\ref{prop:lower-deg}]
This follows from the standard Poincar\'e duality argument. 
Take any nonzero cohomology class $\alpha \in H^p(\g, \h; \R)$. 
By the Poincar\'e duality \cite[Th.\! 12.1]{Kosz50}, 
we can pick $\beta \in H^{N-p}(\g, \h; \R)$ such that 
$\alpha \w \beta \neq 0$ in $H^N(\g, \h; \R)$. 
Then $\eta(\alpha \w \beta) \neq 0$ by assumption, 
which yields $\eta(\alpha) \neq 0$. 
\end{proof}

We can also recover a result of Benoist--Labourie \cite{Ben-Lab92} from Theorem~\ref{thm:main}, 
though our proof relies on the crucial parts of \cite{Ben-Lab92}.

\begin{prop}[{\cite[Th.\! 1]{Ben-Lab92}}]\label{prop:Ben-Lab92} 
Let $G$ be a connected semisimple Lie group and $H$ its unimodular subgroup with finitely many connected components. 
If the centre $\z(\h)$ of $\h$ contains a nonzero hyperbolic element, 
then there is no compact manifold locally modelled on $G/H$. 
\end{prop}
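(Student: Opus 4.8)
The plan is to deduce the statement from Theorem~\ref{thm:main} by producing an invariant volume form whose cohomology class dies in the $\k_H$-relative complex. First I would invoke Lemma~\ref{lem:H_0} to reduce to the case that $H$ is connected. Since $G$ is semisimple we have $\tr(\ad_\g(Y))=0$ for every $Y$, and since $H$ is unimodular $\tr(\ad_\h(Y))=0$ for $Y\in\h$; the exact sequence $0\to\h\to\g\to\g/\h\to 0$ of $\h$-modules then gives $\tr(\ad_{\g/\h}(Y))=\tr(\ad_\g(Y))-\tr(\ad_\h(Y))=0$ for all $Y\in\h$. Thus the $\h$-action on $\g/\h$ is trace-free, so by Lemma~\ref{lem:top-term}~(1) there is a nonzero $\h$-invariant volume form $\Phi\in(\Lambda^N(\g/\h)^\ast)^\h$.

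My claim is that it suffices to prove that the class of $\Phi$ vanishes in $H^N(\g,\k_H;\R)$. Indeed, $(\Lambda^N(\g/\h)^\ast)^\h$ is one-dimensional, spanned by $\Phi$, so $[\Phi]=0$ in $H^N(\g,\h;\R)$ is equivalent to $H^N(\g,\h;\R)=0$. If $H^N(\g,\h;\R)=0$, then Theorem~\ref{thm:main}~(1) applies directly. Otherwise $[\Phi]\neq 0$ in $H^N(\g,\h;\R)$, and since the image of $[\Phi]$ under $i\colon H^N(\g,\h;\R)\to H^N(\g,\k_H;\R)$ is exactly the $\k_H$-relative class of $\Phi$, its vanishing shows that $i$ is not injective, so Theorem~\ref{thm:main}~(2) applies. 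Either way there is no compact manifold locally modelled on $G/H$.

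The mechanism for the exactness of $\Phi$ in the $\k_H$-relative complex is supplied by the hyperbolic element $X\in\z(\h)$, and this is where I would import the crucial input of \cite{Ben-Lab92}. Since $X$ is hyperbolic, $\ad(X)$ is diagonalizable over $\R$ and yields a grading $\g=\bigoplus_{\lambda}\g_\lambda$ with $\k_H\subseteq\h\subseteq\g_0$ and, by invariance of the Killing form, $\dim\g_\lambda=\dim\g_{-\lambda}$. The guiding model is $\g=\mathfrak{sl}(2,\R)$ with $\h=\R X$, $X=\operatorname{diag}(1,-1)$ and $K_H$ trivial: there $\Phi=-d(X^\ast)$, so the primitive is literally the covector dual to the hyperbolic element. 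In general, because $X$ centralizes $\k_H$ its image $\bar X\in\g/\k_H$ is $\k_H$-fixed, so $\iota(\bar X)$ and $\L(X)$ act on $(\Lambda^\bullet(\g/\k_H)^\ast)^{\k_H}$ and satisfy $\L(X)=d\,\iota(\bar X)+\iota(\bar X)\,d$; as $\L(X)$ is invertible off the zero-weight part, the cohomology $H^\bullet(\g,\k_H;\R)$ is computed by the weight-zero subcomplex, within which $\Phi$ lives.

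The main obstacle is precisely to show that $\Phi$ becomes a boundary in this weight-zero subcomplex, equivalently that $[\Phi]=0$ in $H^N(\g,\k_H;\R)$. Two features make this delicate and force reliance on \cite{Ben-Lab92}. First, $\Phi$ is itself of weight zero (its $\g_+$ and $\g_-$ factors carry opposite weights $\pm s$), so the one-parameter Cartan homotopy above does not annihilate it; one must exploit the finer pairing $\g_+\leftrightarrow\g_-$ induced by the hyperbolic flow $\exp(tX)$ to manufacture the primitive. Second, $\Phi$ is not of top degree in the $\k_H$-relative complex—the top degree is $N+\dim\mathfrak{p}_H$ with $\mathfrak{p}_H=\h/\k_H\neq 0$ (as $X$ is hyperbolic, hence noncompact, $X\notin\k_H$)—so the elementary interior-product computation of Lemma~\ref{lem:top-term}~(2) does not apply. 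Since $G$ is unimodular and $\k_H$ is reductive in $\g$, Poincar\'e duality \cite[Th.\! 12.1]{Kosz50} recasts the desired vanishing in the complementary degree $\dim\mathfrak{p}_H$, and it is this vanishing that the dynamical and cohomological arguments of Benoist--Labourie secure.
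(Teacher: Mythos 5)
Your overall skeleton matches the paper's: reduce to connected $H$ via Lemma~\ref{lem:H_0}, observe that semisimplicity of $\g$ plus unimodularity of $\h$ forces the $\h$-action on $\g/\h$ to be trace-free so that a nonzero $\Phi\in(\Lambda^N(\g/\h)^\ast)^\h$ exists, and then split into the two cases of Theorem~\ref{thm:main}: either $H^N(\g,\h;\R)=0$ (part (1)) or $[\Phi]\neq 0$ there and one must kill its image in $H^N(\g,\k_H;\R)$ (part (2)). Up to that point you are correct, and the case split is exactly the paper's.

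The gap is that you never actually prove the vanishing $[\Phi]=0$ in $H^N(\g,\k_H;\R)$, which is the entire content of the argument beyond the setup. You propose a mechanism (the $\ad(X)$-weight grading and the Cartan homotopy $\L(X)=d\,\iota(\bar X)+\iota(\bar X)\,d$), concede yourself that it fails because $\Phi$ has weight zero, and then defer to unspecified ``dynamical and cohomological arguments of Benoist--Labourie''; the appeal to Poincar\'e duality in the complementary degree is not used in the paper and does not by itself produce the vanishing. What the paper actually imports from \cite{Ben-Lab92} is a precise package: a nonzero hyperbolic $X$ with $H\subset\Stab(X)$ and, setting $\omega=dX$ (the Kirillov--Kostant--Souriau form on the adjoint orbit, via the Killing-form identification $\g\simeq\g^\ast$) with $2m=\operatorname{codim}\Stab(X)$, an element $\mu\in(\Lambda^{N-2m}(\g/\h)^\ast)^\h$ with $\mu\w\omega^m\neq 0$ and $d(\mu\w\omega^{m-1})=0$. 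The volume form is then $\Phi=\mu\w\omega^m$, and the primitive in the $\k_H$-relative complex is written down explicitly: since every element of $\k_H$ commutes with $X$ and is elliptic, $X$ (as a covector) lies in $((\g/\k_H)^\ast)^{\k_H}$, whence $[\mu\w\omega^m]_{\g,\k_H}=[d(X\w\mu\w\omega^{m-1})]_{\g,\k_H}=0$. Your $\mathfrak{sl}(2,\R)$ observation that $\Phi=-d(X^\ast)$ is the germ of exactly this idea, but without the closed form $\mu\w\omega^{m-1}$ and the fact that $X$ is $\k_H$-basic, the general case is not established; as written, the crucial step is asserted rather than proved.
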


\begin{proof}
We may assume $H$ to be connected by Lemma~\ref{lem:H_0}. 
We identify $\g$ with $\g^\ast$ via the Killing form. 
In \cite{Ben-Lab92}, it is shown that our assumptions yield the existence of $X \in \g$ such that: 
\begin{itemize}
\item $X$ is a nonzero hyperbolic element. 
\item $H \subset \Stab(X)$. 
\item Let $\omega = dX$. Let $N$ and $2m$ be the codimensions of $H$ and $\Stab(X)$ in $G$, respectively. If we take $\mu \in (\Lambda^{N-2m}(\g/\h)^\ast)^\h$ so that $\mu \w \omega^m \neq 0$, then $d(\mu \w \omega^{m-1}) = 0$. 
\end{itemize}
Here, $\Stab(X) \subset G$ is the stabilizer of $X$ in $G$. 
Remark that $\omega = dX$ is an element of $(\Lambda^2(\g/\stab(X))^\ast)^{\Stab(X)} \ (\subset (\Lambda^2(\g/\h)^\ast)^\h)$ and satisfies $\omega^m \neq 0$ ($2m = \dim (G/\Stab(X))$). 

If $[\mu \w \omega^m]_{\g, \h} = 0$ in $H^N(\g, \h; \R)$, then the proposition follows from Theorem~\ref{thm:main} (1). 
Thus we assume $[\mu \w \omega^m]_{\g, \h} \neq 0$. 
Since every element of $\k_H$ commutes with $X$ and is elliptic, 
$X \in ((\g/\k_H)^\ast)^{\k_H}$. 
Hence 
$[\mu \w \omega^m]_{\g, \k_H} = [d(X \w \mu \w \omega^{m-1})]_{\g, \k_H} = 0$ 
in $H^N(\g, \k_H; \R)$. Apply Theorem~\ref{thm:main} (2). 
\end{proof}

\section{Examples (1): nonreductive Lie groups}

In the rest of this paper, we shall give some applications of Theorem~\ref{thm:main}. 
In this section, we study the case that $G$ is nonreductive. 

\begin{ex}\label{ex:levi}
Let $G$ be a simply connected nonunimodular Lie group and
\[
G = S \ltimes R \quad (\text{$S$: semisimple, $R$: solvable})
\]
be its Levi decomposition. 
Take any closed unimodular subgroup $H$ of $S$ with finitely many connected components. 
Then there is no compact manifold locally modelled on $G/H$. 
\end{ex}

In fact, we can show a slightly more general result: 

\begin{ex}\label{ex:nonunim}
Let $G$ be a nonunimodular Lie group. Let $G'$ be a closed subgroup of $G$ such that $\g'$ is reductive in $\g$ and the adjoint action of $\z(\g')$ on $\g$ is trace-free. Here $\z(\g')$ denotes the centre of $\g'$. 
Let $H$ be any closed unimodular subgroup of $G'$ with finitely many connected components. 
Then there is no compact manifold locally modelled on $G/H$. 
\end{ex}

\begin{proof}[Proof of Example~\ref{ex:nonunim}]
By Proposition~\ref{prop:trace-free}, it suffices to check that: 
\begin{enumerate}[(i)]
\item The $\h$-action on $\g/\h$ is trace-free. 
\item The $\n_\g(\h)$-action on $\g/\h$ is not trace-free. 
\end{enumerate}
We will show the stronger results: 
\begin{enumerate}
\item[(i$'$)] The $\g'$-action on $\g$ is trace-free. 
\item[(ii$'$)] The $\z_\g(\g')$-action on $\g$ is not trace-free. 
\end{enumerate}
Here $\z_\g(\g')$ denotes the centralizer of $\g'$ in $\g$. 

Let us prove (i$'$). Since $\g'$ is reductive, 
we have a direct sum decomposition $\g' = \z(\g') \oplus [\g', \g']$. 
By our assumption, $\z(\g')$ acts trace-freely on $\g$. 
Also, $[\g', \g']$ acts trace-freely on $\g$ since it is a semisimple Lie algebra.

Now let us prove (ii$'$). Let 
\[
\g_1 = \{ X \in \g : \tr(\ad_\g(X)) = 0 \}. 
\]
Since $\g'$ is reductive in $\g$, 
we can pick a $\g'$-invariant subspace $\g_2$ complementary to $\g_1$ in $\g$. 
Note that $\g_2 \neq \{ 0 \}$ and $\tr(\ad_\g(X)) \neq 0$ for any nonzero element $X$ of $\g_2$.
We have $[\g', \g_2] \subset [\g, \g] \subset \g_1$, while $[\g', \g_2] \subset \g_2$ by $\g'$-invariance of $\g_2$. This means $\g_2 \subset \z_\g(\g')$. From these (ii$'$) follows. 
\end{proof}

Next we consider coadjoint orbits. Let $G$ be a Lie group and $F \in \g^\ast$. The coadjoint orbit $G. F \subset \g^\ast$ of $F$ is $G$-diffeomorphic to $G/\Stab(F)$, where $\Stab(F) = \{ g \in G : g.F = F \}$ is the stabilizer of $F$ in $G$. 
Let $\omega = dF$, in other words, 
\[
\omega (X, Y) = - \langle F, [X,Y] \rangle \quad (X, Y \in \g). 
\]
Then $\omega$ is an element of $(\Lambda^2 (\g/\stab(F))^\ast)^{\Stab(F)}$ satisfying $d\omega = 0$ and $\omega^m \neq 0$ ($2m = \dim(G/\Stab(F))$). Under the identification $(\Lambda^2 (\g/\stab(F))^\ast)^{\Stab(F)} \simeq \Omega^2(G/\Stab(F))^G$, $\omega$ corresponds to the Kirillov--Kostant--Souriau symplectic form. 
Applying Theorem~\ref{thm:main} to this setting, we obtain:

\begin{ex}\label{ex:coad}
Let $G$ be a Lie group and $F \in \g^\ast$. 
Assume that $\dim (G/\Stab(F)) > 0$ and $\Stab(F)$ has finitely many connected components.
If $F|_{\k_{\Stab(F)} \cap [\g,\g]} = 0$, 
then there is no compact manifold locally modelled on $G/\Stab(F)$.
\end{ex}
\begin{rem}
The condition $\dim (G/\Stab(F)) > 0$ holds if and only if $F|_{[\g, \g]} \neq 0$. 
\end{rem}
\begin{rem}
If $G$ is a real linear algebraic group, the number of the connected components of $\Stab(F)$ (in the Euclidean topology) is always finite by Whitney's theorem \cite[Th.\! 3]{Whi57}. 
For a nonalgebraic Lie group $G$, it may be infinite. An easy example is: 
\[
G = (\text{universal covering of $SL(2, \R)$}), \quad  F = \begin{pmatrix} 0 & 0 \\ 1 & 0 \end{pmatrix} \in \g \simeq \g^\ast.
\]
Here we identified $\g$ with $\g^\ast$ via the Killing form. 
\end{rem}
\begin{proof}[Proof of Example~\ref{ex:coad}]
Put $2m = \dim(G/\Stab(F))$. 
Recall that $\omega^m$ is a nonzero element of 
$(\Lambda^{2m} (\g/\stab(F))^\ast)^{\stab(F)}$. 
By Theorem~\ref{thm:main} (1), we only need to consider the case that $[\omega^m]_{\g, \stab(F)} \neq 0$. 
Thus, by Theorem~\ref{thm:main} (2), it suffices to prove that $[\omega^m]_{\g, \k_{\Stab(F)}} = 0$. 
Since 
\[
\ker (d: \g^\ast \to \Lambda^2 \g^\ast) = (\g^\ast)^\g = (\g/[\g,\g])^\ast,
\]
our assumption $F|_{\k_{\Stab(F)} \cap [\g,\g]} = 0$ may be rewritten as:
\[
F+F' \in ((\g/\k_{\Stab(F)})^\ast)^{\k_{\Stab(F)}} \quad \text{for some} \ F' \in \ker (d: \g^\ast \to \Lambda^2 \g^\ast). 
\]
We obtain 
\[
[\omega^m]_{\g, \k_{\Stab(F)}} = [d((F+F') \w \omega^{m-1})]_{\g, \k_{\Stab(F)}} = 0 \quad \text{in} \ H^{2m}(\g, \k_{\Stab(F)}; \R) 
\]
as required. 
\end{proof}

When $G$ is a linear solvable Lie group, Example~\ref{ex:coad} gives the following result: 

\begin{ex}\label{ex:solvorbit}
Let $G$ be a linear solvable Lie group and $F \in \g^\ast$. 
Assume that $\dim (G/\Stab(F)) > 0$ and $\Stab(F)$ has finitely many connected components. 
Then there is no compact manifold locally modelled on $G/\Stab(F)$. 
\end{ex}
\begin{rem}
In Example~\ref{ex:solvorbit}, if $G$ is simply connected, then $G/\Stab(F)$ admits an infinite discontinuous group (\cite[Th.\! 2.2]{Kob93}). 
\end{rem}
\begin{rem}
In Example~\ref{ex:solvorbit}, the linearity of $G$ is crucial. 
Consider the nonlinear nilpotent Lie group 
\[
G := \left\{ \begin{pmatrix}1 & a & c \\ & 1 & b \\ & & 1\end{pmatrix} : a,b,c \in \R \right\} / \left\{ \begin{pmatrix}1 & 0 & n \\ & 1 & 0 \\ & & 1\end{pmatrix} : n \in \Z \right\}. 
\]
Its 2-dimensional coadjoint orbits have connected stabilizers, 
but admit compact Clifford--Klein forms. 
\end{rem}
\begin{proof}[Proof of Example~\ref{ex:solvorbit}]
Let $G_0$ be the identity component of $G$ and $[G_0, G_0]$ be its commutator subgroup. 
Then $[G_0, G_0]$ is closed in $G$ and it does not contain a compact subgroup other than $\{ 1 \}$ \cite[Ch.\! XVIII, Th.\! 3.2]{Hoc65}. 
In particular $K_{\Stab(F)} \cap [G_0, G_0] = \{ 1 \}$ and hence $\k_{\Stab(F)} \cap [\g, \g] = 0$. 
Thus, we can apply Example~\ref{ex:coad}. 
\end{proof}

\section{Examples (2): reductive Lie groups}

In this section, we study the case that $G$ is reductive and $H$ is not reductive in $G$. 
Note that, when $G$ is reductive and $H$ is reductive in $G$, 
Theorem~\ref{thm:main} (1) is not applicable and, as we saw in Section 5, Theorem~\ref{thm:main} (2) is identical to \cite[Th.\! 1.3]{Mor}. 

\begin{ex}\label{ex:parab}
Let $G$ be a reductive Lie group and $P = MAN$ be a proper parabolic subgroup of $G$. Then there is no compact manifold locally modelled on $G/N$. 
\end{ex}
\begin{proof}
Since $\g$ and $\n$ are unimodular, the $\n$-action on $\g/\n$ is trace-free. 
On the other hand, $\a$ normalizes $\n$ and contains an element $X$ such that $\tr_\n(X) \neq 0$. Since $\g$ is unimodular, such $X$ also satisfies $\tr_{\g/\n}(X) \neq 0$. Thus, we can apply Proposition~\ref{prop:trace-free}.
\end{proof}

\begin{ex}\label{ex:ssorbit}
Let $G$ be a real linear semisimple algebraic group and $X \in \g$. 
Let $\Stab(X) \subset G$ be the stabilizer of $X$ in $G$. 
Let $X=X_e + X_h + X_n$ be the decomposition of $X$ into elliptic, hyperbolic, and nilpotent parts. 
If $X$ is not a semisimple element (i.e. $X_n \neq 0$), 
then there is no compact manifold locally modelled on $G/\Stab(X)$. 
\end{ex}
\begin{rem}
The study of Problem~\ref{prob:cpt} for $G/\Stab(X)$, 
where $G$ and $X$ are as in Example~\ref{ex:ssorbit}, 
was started by \cite{Kob92}, and then extended by \cite{Ben-Lab92}. 
We list their results here: 
\begin{itemize}
\item Assume that $X$ is a semisimple element (i.e. $X_n = 0$). 
If $\Stab(X) \neq \Stab(X_e)$, namely, 
if $G/\Stab(X)$ does not carry a $G$-invariant complex structure, 
then $G/\Stab(X)$ does not admit a compact Clifford--Klein form. 
only if 
(\cite[Th.\! 1.3]{Kob92}).
\item If $X$ is a nilpotent element (i.e. $X = X_n$), 
then there is no compact manifold locally modelled on $G/\Stab(X)$ 
(\cite[Cor.\! 4]{Ben-Lab92}). 
\item If $X_h \neq 0$, 
then there is no compact manifold locally modelled on $G/\Stab(X)$ 
(\cite[Cor.\! 5]{Ben-Lab92}).
\end{itemize}
Combining \cite[Cor.\! 5]{Ben-Lab92} and Example~\ref{ex:ssorbit}, we conclude that, if $X$ is not an elliptic element (i.e. if $X \neq X_e$), then there is no compact manifold locally modelled on $G/\Stab(X)$. 
\end{rem}
\begin{proof}[Proof of Example~\ref{ex:ssorbit}]
We identify $\g$ with $\g^\ast$ via the Killing form. 
Let $\omega = dX$. 
Then $\omega$ is an element of $(\Lambda^2(\g/\stab(X))^\ast)^{\Stab(X)}$ 
satisfying $d\omega = 0$ and $\omega^m \neq 0$ ($2m = \dim (G/\Stab(X))$). 
By Theorem~\ref{thm:main} (1), we may assume $[\omega^m]_{\g, \stab(X)} = 0$. 
Then, by Theorem~\ref{thm:main} (2), it is enough to prove that $[\omega^m]_{\g, \k_{\Stab(X)}} = 0$. 

Put $X_{ss} = X_e + X_h$. 
Let $\omega_{ss} = dX_{ss}$ and $\omega_n = dX_n$. 
They are elements of $(\Lambda^2 (\g/\stab(X))^\ast)^{\stab(X)}$ 
because $Y \in \g$ commutes with $X$ if and only if it commutes with $X_{ss}$ and $X_n$. 
Since every element of $\k_{\Stab(X)}$ commutes with $X_n$ and is elliptic, $X_n$ is perpendicular to $\k_{\Stab(X)}$. 
Therefore, $X_n \in ((\g/\k_{\Stab(X)})^\ast)^{\k_{\Stab(X)}}$. We have 
\begin{align*}
[\omega^m]_{\g,\k_{\Stab(X)}} &= [\sum_{k=0}^m \frac{m!}{k!(m-k)!} \, \omega_{ss}^{m-k} \w \omega_n^{k}]_{\g,\k_{\Stab(X)}} \\ 
&= [\omega_{ss}^m + d ( X_n \w \sum_{k=1}^m \frac{m!}{k!(m-k)!} \, \omega_{ss}^{m-k} \w \omega_n^{k-1} )]_{\g,\k_{\Stab(X)}} \\ 
&= [\omega_{ss}^m]_{\g,\k_{\Stab(X)}} 
\quad \text{in $H^{2m}(\g, \k_{\Stab(X)}; \R)$}. 
\end{align*}
Let us prove that $\omega_{ss}^m = 0$. 
To see this, it suffices to show that $\stab(X) \subsetneq \stab(X_{ss})$. 
Let us assume the contrary: $\stab(X) = \stab(X_{ss})$. 
Take a Cartan subalgebra $\j$ of $\g \otimes \C$ containing $X_{ss}$. 
Then we have 
\[
\j \subset \stab(X_{ss}) \otimes \C = \stab(X) \otimes \C \subset \stab(X_n) \otimes \C. 
\]
Since $\j$ is a maximal abelian subalgebra of $\g \otimes \C$, we have $X_n \in \j$. 
This is impossible because $\j$ consists of semisimple elements. 
\end{proof}

\subsection*{Acknowledgements}
The author is very grateful to Professor Toshiyuki Kobayashi for his warm encouragement and many valuable suggestions. 
This work was supported by JSPS KAKENHI Grant Number 14J08233 and the Program for Leading Graduate Schools, MEXT, Japan.

\noindent
\textsc{Graduate School of Mathematical Science, \\ The University of Tokyo, \\
3-8-1 Komaba, Meguro-ku, Tokyo 153-8914, Japan} \\
\textit{E-mail address}: \texttt{ymorita@ms.u-tokyo.ac.jp}

\end{document}